\newcommand{\ds}{\displaystyle}
\newcommand{\sfrac}[2]{\mathchoice
  {\kern0em\raise.5ex\hbox{\the\scriptfont0 #1}\kern-.15em/
   \kern-.15em\lower.25ex\hbox{\the\scriptfont0 #2}}
  {\kern0em\raise.5ex\hbox{\the\scriptfont0 #1}\kern-.15em/
   \kern-.15em\lower.25ex\hbox{\the\scriptfont0 #2}}
  {\kern0em\raise.5ex\hbox{\the\scriptscriptfont0 #1}\kern-.2em/
   \kern-.15em\lower.25ex\hbox{\the\scriptscriptfont0 #2}}
  {#1\!/#2}}
\def\ub {{\bf u}}
\def\ub {{\bf u}}
\def\xb {{\bf x}}
\def\fb {{\bf f}}
\def\myhalf {\sfrac{1}{2}}
\def\threehalf {\sfrac{3}{2}}
\begin{document}

\title{Analysis of some projection method based preconditioners for models of incompressible flow}
\author{Mingchao Cai \thanks{Courant Institute of Mathematical Sciences, New York University, New York, NY 10012, E-mail: cmchao2005@gmail.com}
}

\date{\today}          
\maketitle

\begin{abstract}
In this paper, several projection method based preconditioners for various incompressible flow models are studied. In particular, we are interested in the theoretical analysis of a pressure-correction projection method based preconditioner \cite{griffith2009accurate}. For both the steady and unsteady Stokes problems, we will show that the preconditioned systems are well conditioned. Moreover, when the flow model degenerates to the mixed form of an elliptic operator, the preconditioned system is an identity no matter what type of boundary conditions are imposed; when the flow model degenerates to the steady Stokes problem, the multiplicities of the non-unitary eigenvalues of the preconditioned system are derived. These results demonstrate the effects of boundary treatments and are related to the stability of the staggered grid discretization. To further investigate the effectiveness of these projection method based preconditioners, numerical experiments are given to compare their performances. Generalizations of these preconditioners to other saddle point problems will also be discussed.
\end{abstract}

\begin{keywords}
projection method, preconditioning, Stokes equations, saddle point problems
\end{keywords}

\pagestyle{myheadings}
\thispagestyle{plain}
\markboth{ANALYSIS OF PROJECTION METHOD BASED PRECONDITIONERS}{MINGCHAO CAI}

\section{Introduction}
We consider to solve the incompressible Stokes equations
\begin{equation}\label{Time_Stokes}
 \left \{
        \begin{array} {ll}
         {\rho}\frac{\partial{\bf u}}{\partial t} -\mu {\bf \Delta} {\bf u}+\nabla p
                     =\mathbf{f}       \quad &\mbox{in}~\Omega,\\
                 -\nabla\cdot{\bf u}=g \quad &\mbox{in}~\Omega,
        \end{array}
               \right.
\end{equation}
subject to suitable boundary conditions on $\partial \Omega$. Here, ${\bf u}(\xb, t)$ is the velocity, $p(\xb, t)$ is the pressure, ${\rho}(\xb, t)$ is the density function, $\mu(\xb, t)$ is the viscosity, ${\bf f}$ may include an external force and the nonlinear term $(\ub \cdot \nabla) \ub$, $g$ is 0 or a source term. For simplicity, we assume that $g=0$, $\rho$ and $\mu$ are constants.


Different methods have been proposed and studied for the solution of the above problem. The original projection method, introduced by Chorin and Temam \cite{chorin1968numerical, temam1968method}, decouples the computation of velocity and pressure into solving several Poisson problems. It yields time stepping schemes for the incompressible Stokes equations which require relatively simple linear solvers. However, this approach complicates the specification of physical boundary conditions and may cause artificial numerical boundary layers \cite{e1995projection}. Appropriate "artificial" boundary conditions must be prescribed for the intermediate variables ${\bf u}^*$ and $\phi$, and obtaining high-order accuracy for ${\bf u}$ and $p$ may not be possible in some cases \cite{e1995projection, guermond2006overview}. Compared with the original projection method, implicit schemes which couple the computations of $\ub$ and $p$ together can provide accurate numerical solution and possess better stability \cite{guermond2006overview, newren2007unconditionally}. However, implicit coupled schemes require efficient solvers for the resulted saddle point systems, which are usually difficult to solve. Using preconditioned Krylov methods provides a good remedy to overcome the difficulties \cite{benzi2005numerical, ipsen2001note, murphy2000note}. Consequently, designing effective preconditioner is of vital importance. In a recent work \cite{griffith2009accurate}, a pressure-correction projection method \cite{perot1993analysis, griffith2009accurate, quarteroni2000factorization} was adopted as the preconditioner for the resulting saddle point system. Numerical experiments \cite{griffith2009accurate, cai2013efficient} show that the preconditioner is effective and efficient. However, we note that the preconditioner in \cite{griffith2009accurate} is restricted to the time dependent Stokes case and the corresponding theoretical justification is absent. In this work, we are interested in the following aspects: generalizing the preconditioner in \cite{griffith2009accurate} to steady Stokes problem; providing a theoretical analysis for both the steady and unsteady problems; studying some related preconditioners which are based on exact or spectral-equivalent approximations of the Schur complement; building up the connections between our work with some existing works; extending the projection method based preconditioners to other saddle point problems.

In this paper, we use an implicit time scheme and a staggered grid spatial discretization for (\ref{Time_Stokes}). The resulting saddle point system y using a preconditioned GMRES method \cite{saad1986gmres, saad2003iterative}. One of our interests is analyzing the pressure-correction projection method based preconditioner \cite{griffith2009accurate}. The obtained results are as follows. For both the steady and unsteady Stokes problems, it is shown that the preconditioned systems are well conditioned. More precisely, the nontrivial eigenvalues of the preconditioned systems have uniform lower and upper bounds which are independent of the mesh refinement and the physical parameters. Moreover, for steady Stokes equations, the multiplicities of the non-unitary eigenvalues of the preconditioned system are derived based on the analysis of the commutator difference operator. Specifically, for two dimensional Dirichlet boundary value problem, if there are $n$ cells along each direction, it is shown that there are at most $4(n-1)$ eigenvalues not equal to 1. For periodic boundary value problem, the preconditioned operator is an identity operator. Meanwhile, motivated by other related works \cite{benzi2005numerical, benzi2006augmented, elman1999preconditioning, ipsen2001note, kobelkov2000effective, mardal2004uniform, mardal2011preconditioning, murphy2000note, olshanskii2006uniform, olshanskii2012multigrid}, we propose and investigate several projection method based preconditioners. These preconditioners use exact or approximate inverse of the Schur complement. Their effectiveness, advantages and disadvantages, and the connections with those existing works will be highlighted. In addition, we will discuss how to generalize our work to other saddle point problems.


This paper is organized as follows. In section 2, we present the time and the spatial discretization of the Stokes problem, the projection method based preconditioners, the corresponding matrix representations, and other related preconditioners. In section 3, eigenvalue analysis of the preconditioned systems are presented. In section 4, numerical experiments are given to compare the performances of different preconditioners. Concluding remarks are drawn in the last section.

\section{Discretizations and precondtioners}

\subsection{Time and spatial discretizations}
To have better stability and accuracy properties, an implicit time stepping scheme is adopted. In this paper, we apply the following backward Euler scheme.
\begin{equation}\label{Euler_scheme}
 \left \{
        \begin{array} {ll}
        \frac{\rho}{\Delta t}(\ub^{k+1}-\ub^{k}) - \mu {\bf \Delta} \ub^{k+1} + \nabla p^{k+1}  = \fb^{k+1}, \\
-\nabla\cdot \ub^{k+1} =0.
  \end{array}
               \right.
\end{equation}
Here, $\ub^{k}$ and $p^{k}$ are the approximate solution of $\ub$ and $p$ at $t=k \Delta t$.

\begin{figure}\label{Fig1}
\centering
\includegraphics[height=5.8cm]{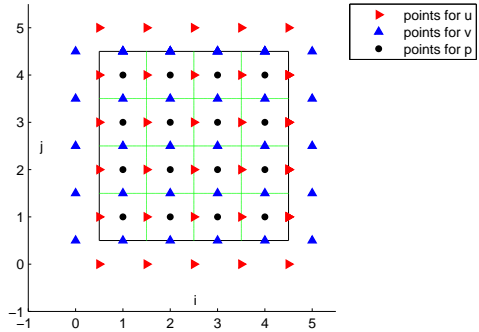}
\vskip .1cm
\caption{A staggered-grid spatial discretization and the spatial location of the face-centered velocity and the
cell-centered pressure degrees of freedom.}
\end{figure}
The staggered grid (i.e. marker-and-cell or MAC \cite{harlow1965numerical}) method is applied for the spatial discretization. In this work, we assume that the computational domain $\Omega$ is a two dimensional rectangle and there are $n_x$ and $n_y$ cells along the $x-$ direction and the $y-$ direction respectively. For simplicity, let us further assume that $\overline{\Omega}=[0,1]^2$ and $h_x=h_y=h$ (and therefore $n_x=n_y=n$). To present the staggered grid discretization, we introduce the following sets.
$$
\displaystyle \overline{\Omega}_1 = \left\{\left(\left(i+\frac{1}{2}\right)h, jh \right) : 0 \le i \le n, 0 \le j \le n+1  \right\},
$$
$$
\displaystyle \overline{\Omega}_2 = \left\{\left(ih, \left(j+\frac{1}{2}\right)h \right) : 0 \le i \le n+1, 0 \le j \le n \right\},
$$
$$
\displaystyle {\Omega}_3 = \left\{\left(ih, jh \right) : 1 \le i \le n, 1 \le j \le n  \right\}.
$$

As shown in Figure 2.1, $\overline{\Omega}_1$, $\overline{\Omega}_2$ and $\overline{\Omega}_3$ are the point sets for $u$, $v$ and $p$ respectively. The divergence of ${\bf u}=(u, v)^{T}$ is approximated at cell centers by
$D {\bf u} =D^x u +D^y v$ with
$$
(D^x u)_{i, j} =
\frac{u_{i+\myhalf, j}-u_{i-\myhalf, j}}{h}   \quad \quad
(D^y v)_{i, j} =
\frac{v_{i, j+\myhalf}-v_{i, j-\myhalf}}{h}.
$$
$D {\bf u}$ is well defined at all points in $\Omega_3$. The gradient of $p$ is approximated at the $x-$ and $y-$ edges of the grid cells by
$Gp =(G^x p, G^y p)^{T}$ with
$$
(G^x p)_{i-\myhalf, j}
=\frac{p_{i,j}- p_{i-1,j}}{h} \quad \quad (G^y p)_{i, j-\myhalf}=
\frac{p_{i, j}-p_{i, j-1}}{h}.
$$
We see that $G^x$ is well defined in the interior of $\Omega_1$ and $G^xy$ is well defined in the interior of $\Omega_2$. There are three different approximations to the Laplace operator required in the staggered grid scheme, one defined at the cell centers,
one defined at the $x$-edges of the grid, and one defined at the $y$-edges of the grid. All employ the standard 5-point
finite difference stencil. The Laplacian of $p$ is approximated at cell centers by
$$
(L^c p)_{i,j}=\frac{p_{i+1, j}- 2 p_{i, j}+p_{i-1, j}}{h^2}+\frac{p_{i, j+1}- 2 p_{i, j}+p_{i, j-1}}{h^2}.
$$
The approximations to the Laplacians of $u$ and $v$ are
$$
(L^x u)_{i-\myhalf,j}=\frac{u_{i+\myhalf, j}- 2 u_{i-\myhalf, j}+u_{i-\threehalf, j}}{h^2}+\frac{u_{i-\myhalf, j+1}- 2 u_{i-\myhalf, j}+u_{i-\myhalf, j-1}}{h^2},
$$
$$
(L^y v)_{i,j-\myhalf}=\frac{v_{i+1, j-\myhalf}- 2 v_{i, j-\myhalf}+v_{i-1, j-\myhalf}}{h^2}+\frac{v_{i, j+\myhalf}- 2 v_{i, j-\myhalf}+v_{i, j-\threehalf}}{h^2}.
$$
The finite difference approximation to the vector Laplacian of ${\bf u}$ is denoted as $L{\bf u}=(L^x u, L^y v)^T$. We see that $L^x u$ is well defined in $\Omega_1$ and $L^y v$ is well defined in $\Omega_2$.

To evaluate the above finite difference approximations near the boundaries of $\Omega$, we need the specification of boundary values along $\partial \Omega$ and "ghost values" located outside of $\Omega$. For periodic boundary conditions, the ghost values at boundaries are copies of the corresponding interior values. For example, the ghost values at the west boundary are copied from the corresponding function values which are evaluated at the nodes nearest to the east boundary. To demonstrate the treatment of Dirichlet boundary condition, we consider the discretization of $u$ near the south boundary. By using the standard stencils, we evaluate
\begin{equation}\label{u_yy_BD_old}
\left[\frac{\partial}{\partial y}\left(\mu \frac{\partial u}{\partial y}\right)\right]_{i+\myhalf,1} = \mu \left(\frac{u_{i+\myhalf, 2} - u_{i+\myhalf, 1}}{h^2}\right) - \mu \left(\frac{2u_{i+\myhalf,1}-2u_{i+\myhalf, \myhalf}}{h^2}  \right).
\end{equation}
Here, $u_{i+\myhalf, \myhalf}$ is the $x$- component of the velocity at the node $((i+\frac{1}{2})h, \frac{1}{2}h)$. Similar treatments are applied to $\frac{\partial}{\partial x}\left(\mu \frac{\partial v}{\partial x}\right)$ near the west and east boundaries. This type of boundary treatment for keeping the symmetry of the discrete system can be interpreted as follows. If ${\bf u}_i$ is defined on $\Omega_i$ (i=1,2), then we denote the function which is defined on $\overline{\Omega}_i$ and coincides with ${\bf u}_i$ on $\Omega_i$ as $\tilde{{\bf u}}_i$. If ${\bf x}\in \partial \Omega \cap \Omega_i$, we then set $\tilde{{\bf u}}_i=0$. If ${\bf x}\in \partial \Omega_i / \partial \Omega$, we set $\tilde{{\bf u}}_i({\bf x})=-{\bf u}_i({\bf x}')$ with ${\bf x}'$ being the nearest node of $\partial \Omega_i$ to ${\bf x}$. By this way, the 0 Dirichlet boundary condition is weakly imposed and for the points in $\Omega_1 \times \Omega_2$, and there holds $(L^x u)_{i-\myhalf,j}=(L^x \tilde{u})_{i-\myhalf,j}, (L^y v)_{i,j-\myhalf}=(L^y \tilde{v})_{i,j-\myhalf}$. For Dirichlet boundary condition, the discretization based on (\ref{u_yy_BD_old}) for approximating the second order derivatives has local truncation order $\mathcal{O}(1)$ while the overall discretization is spatially globally second-order accurate \cite{hundsdorfer2003numerical, newren2007unconditionally}.

After the time discretization and the staggered grid spatial discretization, the fully discrete system is of the following saddle point form.
\begin{equation}\label{Saddle_operator}
M \left[\begin{array}{c}
{\bf u} \\
p
\end{array}\right]=
\left[\begin{array}{cc}
A  & G\\
-D & 0
\end{array}\right]
\left[\begin{array}{c}
{\bf u} \\
p
\end{array}\right]=
\left[\begin{array}{c}
{\bf f}_{\ub} \\
g_p
\end{array}\right].
\end{equation}
Here, $A=\frac{\rho}{\Delta t}I - \mu L$ is the discrete operator for $\frac{\rho}{\Delta t} I- \mu{\bf \Delta}$, $D$ for $\nabla \cdot$, and $G$ for $\nabla$, ${\bf f}_{\ub}$ includes both the force terms and the velocity terms from the previous time step, and $g_p$ represents the source term and the boundary data. Moreover, no matter what type of boundary conditions, we note that the staggered grid discretization always satisfies the compatibility: $-D=G^*$. That is, the two operators are adjoint to each other. Moreover, $DG=L^c$ with $L^c$ being the discrete Laplacian operator applied to the pressure variable. As previously introduced, $L^c$ is well-defined at cell centers. In addition, the boundary condition for $L^c$ is naturally determined by the boundary condition of the global system: when the boundary conditions for the Stokes system are Dirichlet type, $L^c$ is imposed with Neumann boundary conditions, when boundary conditions for the Stokes system are periodic, $L^c$ is also imposed with periodic boundary conditions.

\subsection{Projection method based preconditioners}

The pressure-correction projection algorithm \cite{kim1985application, griffith2009accurate, perot1993analysis, quarteroni2000factorization} for the constant density and constant viscosity Stokes equations is as follows.
\begin{itemize}
\item {\it Step 1:} Solve for an intermediate velocity ${\bf u}^*$ using an implicit viscosity step:
\begin{equation}\label{implicit_velocity}
A {\bf u}^*= {\bf f}_{\ub}.
\end{equation}

\item {\it Step 2:} Note that $\ub^*$ does not generally satisfy the discrete incompressibility constraint, we project ${\bf u}^*$ to the divergence free space by solving
\begin{equation}\label{time_update}
 \left \{
        \begin{array} {ll}
\rho\frac{{\bf u}^{k+1}-{\bf u}^*}{\Delta t}= - G\phi,
\\
- D{\bf u}^{k+1} = 0.
  \end{array}
   \right.
\end{equation}
This leads to a Poisson equation for the intermediate variable $\phi$:
\begin{equation}\label{pre_Poisson_eqn}
-\left(DG\right) \phi =  -  \frac{\rho}{\Delta t}\left(D{\bf u}^* \right).
\end{equation}

\item {\it Step 3:} Update the velocity and correct the pressure term by
\begin{equation}\label{vel_correction}
{\bf u}^{k+1}={\bf u}^* - \frac{\Delta t}{\rho} G\phi.
\end{equation}
\begin{equation}\label{pressure_correction}
p^{k+1}=\phi -  \frac{\Delta t}{\rho} \mu\left(DG\right)\phi.
\end{equation}
\end{itemize}

This algorithm is analogous to the second order projection method whic is based on the Crank-Nicolson scheme \cite{kim1985application, griffith2009accurate}. From (\ref{implicit_velocity})-(\ref{pressure_correction}), the above projection algorithm can be written into a factorization form.
\begin{equation}\label{Boyce_precon_equ}
P_1^{-1}=\left[\begin{array}{cc}
I  & - G\\
0  & \frac{\rho}{\Delta t}I- \mu L^c
\end{array}\right]
\left[\begin{array}{cc}
I     &  0 \\
0     &  (-L^c)^{-1}
\end{array}\right]
\left[\begin{array}{cc}
I     &  0 \\
-D&  -I
\end{array}\right]
\left[\begin{array}{cc}
A^{-1}       & 0 \\
0            & I
\end{array}\right].
\end{equation}
Here $-L^c=G^* G$ is rank one deficient. $(-L^c)^{-1}p$ is in the sense that $G p \notin \mbox{Ker}(G^*)$.
As pointed out in \cite{cai2013efficient}, the last 2 steps can be combined together and therefore leads to the following
more efficient implementation.
\begin{equation}\label{better_P1}
P_1^{-1}=\left[\begin{array}{cc}
I     & G({L^c})^{-1} \\
0     & \mathbb{S}^{-1}
\end{array}\right]
\left[\begin{array}{cc}
I     &   0 \\
-D& -I
\end{array}\right]
\left[\begin{array}{cc}
{A}^{-1}   & 0 \\
0      & I
\end{array}\right].
\end{equation}
Here,
\begin{equation}\label{app_Schur_complement}
\mathbb{S}^{-1}= \frac{\rho}{\Delta t}(-L^c)^{-1} + {\mu} I 
\end{equation}
is an approximation of the inverse of the Schur complement
\begin{equation}\label{Schur_complement}
\displaystyle S^{-1}= \left(-DA^{-1} G \right)^{-1}.
\end{equation}
The approximation property of (\ref{app_Schur_complement}) to (\ref{Schur_complement}) was firstly found in \cite{cahouet1988some} and has been justified using the Fourier analysis approach \cite{cahouet1988some, cai2013efficient, kobelkov2000effective} and the operator mapping theory \cite{benzi2006augmented, mardal2004uniform, mardal2011preconditioning, olshanskii2006uniform, olshanskii2012multigrid, turek1999efficient}.

Note that the pressure correction (\ref{pressure_correction}) is based on the following arguments. Multiplying by $\left( \frac{\rho}{\Delta t} I-\mu L \right)$ on both sides of (\ref{time_update}), and noting that ${\bf u}^{k+1}$ and $p^{k+1}$ satisfy the discrete Stokes equation (\ref{Euler_scheme}) and ${\bf u}^*$ satisfies (\ref{implicit_velocity}), we derive that
\begin{equation}\label{pre_corr_der}
-Gp^{k+1} =- \frac{\Delta t}{\rho} \left( \frac{\rho}{\Delta t} I-\mu L \right) G\phi.
\end{equation}
Multiplying by $D$ on both sides of (\ref{pre_corr_der}), the least square solution of (\ref{pre_corr_der}) satisfies
$$
{L^c} p^{k+1}={L^c} \phi  -\mu\frac{\Delta t}{\rho} DL G\phi.
$$
Therefore, we see that the pressure correction can be
\begin{equation}\label{pre_corr2}
p^{k+1}= \phi - \frac{\Delta t}{\rho}  \mu(L^c)^{-1}DLG\phi.
\end{equation}
By assuming that there holds the commutation property $LG=G{L^c}$, we obtain (\ref{pressure_correction}). Generally, the commutation property does not hold and $LG=G{L^c}$ only in special cases, such as the boundary conditions are periodic.

From the above discussions, if the pressure correction step is modified to be (\ref{pre_corr2}), then we obtain
\begin{equation}\label{modified_precon_equ}
P_4^{-1}=\left[\begin{array}{cc}
I & - G\\
0      & \frac{\rho}{\Delta t}I+\mu(-L^c)^{-1} DL G
\end{array}\right]
\left[\begin{array}{cc}
I     &  0 \\
0     &  (-L^c)^{-1}
\end{array}\right]
\left[\begin{array}{cc}
I     &  0 \\
-D&  -I
\end{array}\right]
\left[\begin{array}{cc}
A^{-1}       & 0 \\
0            & I
\end{array}\right].
\end{equation}
Furthermore, for steady Stokes equation, $A=-\mu L$. By setting $\rho=0$ in (\ref{Boyce_precon_equ}), one obtains the projection method based preconditioner for the steady Stokes problem:
\begin{equation}\label{P1_steady}
\displaystyle
P_1^{-1}=\left[\begin{array}{cc}
I & - G\\
0      & - \mu L^c
\end{array}\right]
\left[\begin{array}{cc}
I     &  0 \\
0     &  (-L^c)^{-1}
\end{array}\right]
\left[\begin{array}{cc}
I     &  0 \\
-D&  -I
\end{array}\right]
\left[\begin{array}{cc}
A^{-1}    & 0 \\
0                & I
\end{array}\right].
\end{equation}
From our study \cite{cai2013efficient}, the projection algorithm expressed as (\ref{P1_steady}) can not be directly applied as a solver for the stationary Stokes. For instance, if (\ref{P1_steady}) is used for solving the driven cavity problem, one can not resolve the corner vortices. However, we note that (\ref{P1_steady}) is still a robust preconditioner for (\ref{Saddle_operator}).

Motivated by the theory developed in \cite{murphy2000note, ipsen2001note} and the above discussion, we see that one can use the exact inverse of the Schur complement. Then, we obtain the following projection type preconditioner.
\begin{equation}\label{P1_ext}
\widetilde{P}^{-1}_1=\left[\begin{array}{cc}
I     &  G(D G)^{-1} \\
0     &  S^{-1}
\end{array}\right]
\left[\begin{array}{cc}
I     &   0 \\
-D    & -I
\end{array}\right]
\left[\begin{array}{cc}
{A}^{-1}   & 0 \\
0      & I
\end{array}\right]
\end{equation}

{\bf Remark 1.} In this work, the derivations of the preconditioners are based on the backward Euler scheme (\ref{Euler_scheme}). The derivation based on the Crank-Nicolson scheme can be found in \cite{griffith2009accurate}. In addition, we note that $P_4$ is similar to the BFBt preconditioner in \cite{elman1999preconditioning} if the Oseen equations degenerates to the Stokes equations.

\subsection{Other related preconditioners}
As ${\mathbb{S}}^{-1}$ is a good approximation of the inverse of the Schur complement, we consider to extend the block diagonal preconditioners discussed in \cite{cahouet1988some, mardal2004uniform, mardal2011preconditioning,  olshanskii2006uniform, olshanskii2012multigrid} to the following lower and upper triangular preconditioners. We study
\begin{equation}\label{CC_Pre_lower}
P_2^{-1}=\left[\begin{array}{cc}
I       & 0 \\
0       & -{\mathbb{S}}^{-1}
\end{array}\right]
\left[\begin{array}{cc}
I     &  0 \\
D     &  I
\end{array}\right]
\left[\begin{array}{cc}
{A}^{-1} & 0 \\
0      & I
\end{array}\right]
\end{equation}
and
\begin{equation}\label{CC_Pre_upper}
P_3^{-1}=\left[\begin{array}{cc}
{A}^{-1}   & 0 \\
0    & I
\end{array}\right]
\left[\begin{array}{cc}
I     & -G \\
0 &  I
\end{array}\right]
\left[\begin{array}{cc}
I       & 0 \\
0   & -{\mathbb{S}}^{-1}
\end{array}\right].
\end{equation}
These preconditioners are actually the approximations of the following block triangular preconditioners
\begin{equation}\label{saddle_Schur}
\widetilde{P}_2^{-1} = \left[\begin{array}{cc}
A       & 0 \\
-D& -S
\end{array}\right]^{-1}  \quad \mbox{and} \quad
\widetilde{P}_3^{-1} = \left[\begin{array}{cc}
A     & G \\
0 & -S
\end{array}\right]^{-1},
\end{equation}
respectively.

In the following, we will study all the preconditioners mentioned above. In particular, we will discuss the advantage, the disadvantages, and the algebraic properties of all the precondtioners. 

\subsection{Matrix representations}
In this subsection, we present the matrix representations of the discrete differential operators. If the lexicographic order \cite{elman2005finite} (indexing the unknowns from the left to the right and from the bottom to the top) of the degrees of freedom is used, from the staggered grid discretization and the boundary treatments, then the resulting linear system for the 2D Stokes problem with Dirichlet boundary on a square domain is a saddle point problem of the form (\ref{Saddle_operator}). The corresponding matrices are \cite{elman1999preconditioning}
$$
-L=\frac{1}{h^2}\left[
\begin{array}{cc}
I_n\otimes T_D + T_E \otimes I_{n-1}    &0   \\
 0       & I_{n-1} \otimes T_E + T_D \otimes I_n
\end{array}
\right],    \quad
G^*=\frac{1}{h}\left[
\begin{array}{cc}
I_n\otimes B_D     &B_D \otimes I_n
\end{array}
\right].
$$
Here, $L$ and $G^*$ are the matrix representations of the discrete ${\bf \Delta}$ and the discrete $-\mbox{div}$ according to the above variables ordering, and
$$
T_D=\left[
\begin{array}{ccccc}
2         & -1      &      &      &       \\
-1        & 2       & -1        &     &   \\
    & \ddots      & \ddots    & \ddots   & \\
    &   & -1     & 2        & -1\\
    &      &     & -1       & 2
\end{array}
\right], \quad
T_E=\left[
\begin{array}{ccccc}
3         & -1      &      &         & \\
-1        & 2       & -1        &    & \\
    & \ddots      & \ddots    & \ddots     & \\
    &   & -1     & 2        & -1\\
    &   &      & -1       & 3
\end{array}
\right],
$$

$$
T_N=\left[
\begin{array}{ccccc}
1         & -1      &      &      &    \\
-1        & 2       & -1        &     & \\
    & \ddots     & \ddots    & \ddots & \\
    &   & -1     & 2        & -1\\
    &   &        & -1       & 1
\end{array}
\right], \quad
E_0=\left[
\begin{array}{ccccc}
1       &       &      &       \\
        & 0       &    &    \\
    &      & \ddots    &    \\
    &   &    & 0       \\
    &   &     &  &   1
\end{array}
\right], \quad
B_D=\left[
\begin{array}{cccc}
-1        &      &      &        \\
1        & -1     &      &       \\
    & \ddots     & \ddots  &     \\
    &          & 1       & -1 \\
    &     &         &  1
\end{array}
\right].
$$
Moreover, there holds \cite{elman1999preconditioning}
$$
B_D B_D^{T}= T_N, \quad B_D^{T}B_D= T_D, \quad T_E=T_N+2E_0, \quad B_DT_D B_D^{T}= T_N^2.
$$

If periodic boundary conditions are imposed in both the $x$- and $y$- directions, then
$$
-L=\frac{1}{h^2}\left[
\begin{array}{cc}
I_n\otimes T_P + T_P \otimes I_{n}    &0   \\
 0       & I_{n} \otimes T_P + T_P \otimes I_n
\end{array}
\right],    \quad
G^*=\frac{1}{h}\left[
\begin{array}{cc}
I_n\otimes B_P     &B_P \otimes I_n
\end{array}
\right].
$$
Here $T_P$ and $B_P$ are circulant matrices \cite{davis1979circulant, strang1999discrete}.
$$
T_P=\left[
\begin{array}{ccccc}
2         & -1      &      &      &    -1   \\
-1        & 2       & -1        &     &   \\
    & \ddots      & \ddots    & \ddots   & \\
    &   & -1     & 2        & -1\\
-1    &      &     & -1       & 2
\end{array}
\right], \quad
B_P=\left[
\begin{array}{ccccc}
1       &-1      &         &       &   \\
        & 1      &-1       &       &   \\
        &      & \ddots  &\ddots   &   \\
        &      &           & 1     & -1\\
-1      &       &          &       & 1
\end{array}
\right].
$$
Moreover, we have
$$
B_P B_P^{T}=T_P, \quad T_P=F^* \Lambda F.
$$
Here, $F$ is a Fourier matrix and $\Lambda$ is a diagonal matrix \cite{strang1999discrete} with the eigenvalues as its diagonal entries. Therefore, when boundary conditions are purely periodic, there holds the commutation property: $LG=G{L^c}$. Moreover, a Fourier matrix times a vector can be realized by applying the Fast Fourier transform to the vector. Therefore, FFT is usually used for the resulted system \cite{strang1999discrete}.

Based on the above discussions and explanations, if the boundary condition is periodic in the $x$- direction and is Dirichlet in the $y$- direction, it is not difficult to write out the corresponding matrix representations either.

\section{Analysis}

To link $P_1$ with the original saddle point form (\ref{Saddle_operator}), we formally calculate
\begin{eqnarray}\label{preconditioned_sys}
P_1^{-1} M &=&
\left[\begin{array}{cc}
I & - G\\
0      & \frac{\rho}{\Delta t}I- \mu L^c
\end{array}\right]
\left[\begin{array}{cc}
I     &  0 \\
0     &  (-L^c)^{-1}
\end{array}\right]
\left[\begin{array}{cc}
I      &   0 \\
-D&  -I
\end{array}\right]
\left[\begin{array}{cc}
A^{-1}       & 0 \\
0            & I
\end{array}\right]
\left[\begin{array}{cc}
A       & G\\
-D& 0
\end{array}\right]  \cr
&=& \left[\begin{array}{cc}
I        & (I-G(L^c)^{-1}D)A^{-1} G\\
0        & \mathbb{S}^{-1} S
\end{array}\right].
\end{eqnarray}
Here, we assume that $L^c$ is invertible. In implementation, $(-L^c)^{-1}$ times a pressure vector is evaluated in a way which ensures that the null-space is handled consistently and the mean pressure value is kept constant. Note that $A = \frac{\rho}{\Delta t} I - \mu L$, when the boundary condition is purely periodic, the basic operators $G$, $D$, $A$ and $L^c$ are commutative, $P_1^{-1} M$ is exactly the discrete identity operator. Most importantly, from (\ref{preconditioned_sys}), we see that the preconditioned system is block upper triangular. Therefore,
\begin{equation}\label{eigen_eqn}
\displaystyle \mbox{det}\left(\lambda I - P_1^{-1} M\right) = \mbox{det}\left((\lambda-1) I\right) \mbox{det}\left(\lambda I -\mathbb{S}^{-1} S \right).
\end{equation}
It means that the eigenvalues of the preconditioned system are either 1 or the eigenvalues of the $(2,2)$-block of (\ref{preconditioned_sys}).

Similarly, for the other preconditioners, it is easy to derive that
\begin{eqnarray}\label{preconditioned_P2M}
P_2^{-1} M &=&
\left[\begin{array}{cc}
I        & A^{-1} G \\
0    & \mathbb{S}^{-1} S
\end{array}\right],
\end{eqnarray}
and (or calculate $M P_3^{-1}$)
\begin{eqnarray}\label{preconditioned_P3M}
P_3^{-1} M &=&
\left[\begin{array}{cc}
I-A^{-1} G \mathbb{S}^{-1} D        & A^{-1} G \\
\mathbb{S}^{-1} D   &  0
\end{array}\right].
\end{eqnarray}
From (\ref{preconditioned_sys})-(\ref{preconditioned_P3M}), we see that $P_1^{-1} M$, $P_2^{-1} M$ and $P_3^{-1} M$ actually share the same spectral. Moreover, the eigenvalues of the preconditioned systems are either 1 or the eigenvalues of the $\mathbb{S}^{-1} S$. Therefore, we shall analyze the spectral of $\mathbb{S}^{-1} S$ under the staggered grid discretization.
It should be pointed out that, in (\ref{preconditioned_sys}), the $(1, 2)$ block contains a projection operator $I-G(L^c)^{-1}D$. The projection operator, maps a velocity vector to $\mbox{Ker}(D)$. Intuitively, (\ref{preconditioned_sys}) has better approximation properties than (\ref{preconditioned_P2M}) or (\ref{preconditioned_P3M}). We will check that whether $P_1$ does provide better performance or not.

For the exact inverse Schur complement based precondtioner, $\widetilde{P}^{-1}_1$, we have the following proposition, which is an extension of Remark 2 in \cite{murphy2000note}.

\begin{proposition} Let $\widetilde{P}^{-1}_1$ be the preconditioner defined in (\ref{P1_ext}), we have
\begin{equation}\label{preconded_sys_ext}
\widetilde{P}^{-1}_1 M =
\left[\begin{array}{cc}
I        & (I-G(L^c)^{-1}D)A^{-1} G\\
0        & I
\end{array}\right].
\end{equation}
The preconditioned system $\widetilde{P}^{-1}_1 M$ have the minimal polynomial $(\lambda-1)^2$.
\end{proposition}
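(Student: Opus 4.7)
The plan is to verify the block form of $\widetilde{P}^{-1}_1 M$ by a direct matrix multiplication, and then read off the minimal polynomial from the fact that the result is $I$ plus a block-nilpotent matrix. The only nontrivial algebraic identity used is the relation $L^c = DG$ recorded in Section 2, which allows us to identify $(DG)^{-1}$ appearing in $\widetilde{P}_1^{-1}$ with $(L^c)^{-1}$ appearing in the claimed result.

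First I would work through the three matrix factors in~(\ref{P1_ext}) from right to left, applied to $M$. The rightmost factor rescales the first block-column to give
$\left[\begin{smallmatrix} I & A^{-1}G \\ -D & 0\end{smallmatrix}\right]$.
The middle factor eliminates the $-D$ in the $(2,1)$ position and produces $-DA^{-1}G = S$ in the $(2,2)$ position, yielding $\left[\begin{smallmatrix} I & A^{-1}G \\ 0 & S\end{smallmatrix}\right]$. Finally, multiplying by the leftmost factor gives a $(1,2)$-block equal to $A^{-1}G + G(DG)^{-1}S$ and a $(2,2)$-block equal to $S^{-1}S = I$. Substituting $S = -DA^{-1}G$ and $(DG)^{-1} = (L^c)^{-1}$, the $(1,2)$-block simplifies to $A^{-1}G - G(L^c)^{-1}DA^{-1}G = (I - G(L^c)^{-1}D)A^{-1}G$, which is precisely~(\ref{preconded_sys_ext}).

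For the minimal-polynomial statement, I would write $\widetilde{P}^{-1}_1 M = I + N$ where
\[
N = \left[\begin{array}{cc} 0 & (I-G(L^c)^{-1}D)A^{-1}G \\ 0 & 0 \end{array}\right].
\]
Since $N$ is strictly block upper triangular with zero diagonal blocks, $N^2 = 0$, so $(\widetilde{P}_1^{-1}M - I)^2 = 0$. This shows $(\lambda-1)^2$ annihilates $\widetilde{P}^{-1}_1 M$; as long as the off-diagonal block is nonzero (which fails only in the exact commuting case such as purely periodic boundary conditions, consistent with the earlier observation on $P_1^{-1}M$), the minimal polynomial is exactly $(\lambda-1)^2$, and otherwise it reduces to $\lambda - 1$.

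The main subtlety is bookkeeping around the singularity of $L^c$: with Dirichlet boundary conditions for the Stokes system, $L^c$ carries a Neumann null space (the constants), and in the periodic case $L^c$ is again rank-one deficient. Per the convention introduced after~(\ref{Boyce_precon_equ}), $(L^c)^{-1}$ is understood on the orthogonal complement of this null space, and the same convention applies to $(DG)^{-1}$ in~(\ref{P1_ext}); since $S = -DA^{-1}G$ maps the discrete pressure space into the range of $D$, which coincides with the range of $L^c$ acted on from the left, all the compositions above are well defined on the appropriate quotient. Apart from this consistency check, the argument is a routine verification.
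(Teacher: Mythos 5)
Your computation is correct and is essentially the proof the paper intends: the proposition is left unproved in the text precisely because it follows from the identical block multiplication already carried out for $P_1^{-1}M$ in (\ref{preconditioned_sys}), with $\mathbb{S}^{-1}$ replaced by the exact $S^{-1}$ so that the $(2,2)$ block becomes $I$, and the nilpotency of the strictly upper triangular remainder gives $(\lambda-1)^2$. Your added remarks --- that $(DG)^{-1}=(L^c)^{-1}$ must be interpreted modulo the constant-pressure null space, and that the minimal polynomial degenerates to $\lambda-1$ when the $(1,2)$ block vanishes (e.g.\ purely periodic boundary conditions) --- are correct refinements, not departures from the paper's argument.
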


\subsection{Analysis of the preconditioners for the stationary Stokes problem}
By setting $\rho=0$ in (\ref{Boyce_precon_equ}), (\ref{modified_precon_equ}) and (\ref{P1_ext}), we obtain the projection method based precondtioner (\ref{P1_steady}) for the stationary Stokes problem. From (\ref{eigen_eqn}) and noting that $\mathbb{S}^{-1}= \mu I$ in the stationary Stokes case, we see that the eigenvalues of the preconditioned system are either 1 or the eigenvalues of the Schur complement (\ref{Schur_complement}). In this subsection, we mainly discuss the preconditioned system which uses (\ref{P1_steady}) as the preconditioner. In particular, we will focus on the analysis for the Dirichlet boundary problem.

Let ${\bf U}$ be the linear space of vector functions defined on $\Omega_1 \times \Omega_2$ and ${\bf P}$ be the linear space defined on $\Omega_3$ satisfying mean value 0. For homogeneous Dirichlet boundary problem, the boundary treatments are stated in section 2.1. Moreover, we endow ${\bf U}$ and ${\bf P}$ with the following inner products and norms.
$$
({\bf u}, {\bf v})_{\bf U}=\displaystyle \sum_{{\bf x}\in \Omega_1} h^2 u_1({\bf x})v_1({\bf x})+\sum_{{\bf x}\in \Omega_2} h^2 u_2({\bf x}) v_2({\bf x}), \quad ||{\bf u}||_0^2= ({\bf u}, {\bf u})_{\bf U},
$$
$$
(p, q)_0=\displaystyle \sum_{{\bf x}\in \Omega_3} h^2 p({\bf x}) q({\bf x}), \quad ||p||_0^2=(p, p)_0,
$$
$$
||{\bf u}||_1^2=(-L{\bf u}, {\bf u})_{\bf U}=\displaystyle \sum_{{\bf x}\in \Omega_1} h^2 (-L^x u_1, u_1)+\sum_{{\bf x}\in \Omega_2} h^2(-L^y u_2, u_2).
$$
In the definition of $||\cdot||_1$ norm, we have implicitly used the 0 Dirichlet boundary conditions. We see that the functional spaces ${\bf U}$ and ${\bf P}$ are approximations of $(H^1_0(\Omega))^2$ and $L^2_0(\Omega)$ respectively \cite{kobelkov1986inequality, kobelkov1994numerical}.

The staggered grid discretization is uniformly div-stable \cite{girault1996finite, han1998new, kobelkov1986inequality, ol2000best}, it means that the following lemma holds \cite{kobelkov1986inequality}.

\begin{lemma}
\label{stability}
There exists a constant $\beta > 0$ independent of the mesh refinement such that
\begin{equation}\label{inf_sup}
\sup_{{\bf u}|\partial \Omega=0} \frac{(-D{\bf u}, p)_0}{||{\bf u}||_1}  \ge \beta ||p||_0, \quad \forall p \in {\bf P}.
\end{equation}
\label{inf_sup_stability}
\end{lemma}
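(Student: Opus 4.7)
The plan is to prove the discrete inf-sup condition via Fortin's criterion, which is the canonical route for stability of mixed discretizations. The two ingredients are: the classical continuous Stokes inf-sup on $\Omega$ (Ne\v{c}as--Babu\v{s}ka), and the construction of a Fortin projector $\Pi_h : (H^1_0(\Omega))^2 \to {\bf U}$ that commutes with divergence at the cell-average level and is $H^1$-bounded uniformly in $h$.

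I would first invoke the continuous result: for every $p \in L^2_0(\Omega)$ there exists $\mathbf{v} \in (H^1_0(\Omega))^2$ with $-\nabla\cdot \mathbf{v}=p$ and $\|\mathbf{v}\|_{H^1(\Omega)} \le C_\Omega \|p\|_{L^2(\Omega)}$. Given a cell-centered grid function $p\in{\bf P}$, identify it with its piecewise-constant extension $\bar p$ on $\Omega$; since $p$ has zero mean, so does $\bar p$, and the continuous result supplies such a $\mathbf{v}$. Then define $\Pi_h$ by averaging normal traces along the MAC cell faces: set $(\Pi_h\mathbf{v})_1$ at the $u$-node $((i+\myhalf)h,jh)$ to be the line average of $v_1$ over the vertical face segment of length $h$ centered there (and analogously for $(\Pi_h\mathbf{v})_2$). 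By the divergence theorem applied to each control cell $\tau_{i,j}$,
$$
h^2 (D\Pi_h\mathbf{v})_{i,j} \;=\; \int_{\partial\tau_{i,j}} \mathbf{v}\cdot\mathbf{n}\,ds \;=\; \int_{\tau_{i,j}} \nabla\cdot \mathbf{v}\,dx \;=\; -\int_{\tau_{i,j}} \bar p\,dx \;=\; -h^2 p_{i,j},
$$
so that $(-D\Pi_h\mathbf{v},p)_0 = \|p\|_0^2$. This is the discrete commuting identity.

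The main remaining, and most delicate, step is to show $\|\Pi_h\mathbf{v}\|_1 \le C\|\mathbf{v}\|_{H^1(\Omega)}$ with $C$ independent of $h$. Because $\|\cdot\|_1^2=(-L\cdot,\cdot)_{\bf U}$ is a sum of squared one-sided differences of adjacent face averages, Cauchy--Schwarz bounds each such difference by the $L^2$-norm of $\partial_x v_1$ (respectively $\partial_y v_2$) over a patch of two neighboring cells, and summing over patches controls $\|\Pi_h\mathbf{v}\|_1^2$ by $C\|\nabla\mathbf{v}\|_{L^2}^2$. The obstacle is the boundary strip: the symmetric treatment (\ref{u_yy_BD_old}) effectively places the boundary $u$-value at distance $h/2$ from the first interior $u$-node, producing an $\mathcal{O}(1)$ pointwise truncation, but the zero trace $\mathbf{v}|_{\partial\Omega}=0$ restores the bound through a local trace/Poincar\'e inequality on the boundary cells. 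This patch-wise analysis is the technical heart of the argument and is precisely what is carried out in \cite{kobelkov1986inequality, kobelkov1994numerical}. Combining the uniform $H^1$-stability with the commuting identity and the continuous bound $\|\mathbf{v}\|_{H^1}\le C_\Omega\|p\|_0$ yields the discrete inf-sup with $\beta = 1/(CC_\Omega)$.
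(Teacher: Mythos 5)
Your Fortin-operator outline is correct and is essentially the argument behind the references the paper cites for this lemma (\cite{girault1996finite, han1998new, kobelkov1986inequality}); the paper itself offers no proof beyond those citations. The commuting identity $(-D\Pi_h\mathbf{v},p)_0=\|p\|_0^2$ is verified correctly, and you rightly isolate the uniform discrete-$H^1$ bound on $\Pi_h$ (including the boundary strip affected by the reflected ghost values) as the one genuinely technical step, which you, like the paper, delegate to the cited works.
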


The proof of this lemma in finite element terminology can be found in \cite{girault1996finite, han1998new}.
The constant $\beta$ depends on the shape of the domain (for example, the ratio of the length along the $x$- direction and the length along the $y$- direction \cite{ol2000best}) but independent of the mesh refinement. This lemma states that $D$ is a surjective from ${\bf U}$ to ${\bf P}$ and the staggered grid discretization is uniformly div-stable \cite{brezzi1991mixed}.

To study the spectral of the Schur complement, we introduce the following lemma. It points out that the spectrum of the Schur complement is equivalent to the solution of a generalized eigenvalue problem.

\begin{lemma}
\label{equivalent_eig}
If $\lambda$ and $p$ are an nonzero eigenvalue and the corresponding eigenvector of the Schur complement, i.e.,
\begin{equation}\label{eigen_Schur}
G^* A^{-1}Gp = \lambda p,
\end{equation}
then $\frac{1}{\lambda}$ is the eigenvalue of the generalized eigenvalue problem:
\begin{equation}\label{equ_eigen_Schur}
A {\bf u} = \frac{1}{\lambda} GG^* {\bf u}   \quad \mbox{with} \quad {\bf u}= Gp.
\end{equation}
Furthermore, if $\lambda$ is nonzero, the inverse of the above argument also holds.
\end{lemma}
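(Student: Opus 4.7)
The plan is to unwind the Schur complement via the identity $-D = G^{*}$, which gives $S = G^{*} A^{-1} G$, and then to translate the pressure eigenvalue equation into a velocity statement by left-multiplying with $G$. Starting from $G^{*} A^{-1} G p = \lambda p$ with $\lambda \neq 0$, I would apply $G$ to both sides to obtain
\begin{equation*}
G G^{*} \bigl(A^{-1} G p\bigr) = \lambda\, G p.
\end{equation*}
The natural velocity-side object is then the vector ${\bf u}$ with $A {\bf u} = G p$, i.e.\ ${\bf u} = A^{-1} G p$ (this is how I read the notation ``${\bf u} = Gp$'' in the statement: it identifies ${\bf u}$ through the relation $A{\bf u}=Gp$). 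Substituting $A {\bf u}$ for $G p$ on the right yields $G G^{*} {\bf u} = \lambda A {\bf u}$, which is exactly the generalized eigenvalue equation $A {\bf u} = (1/\lambda) G G^{*} {\bf u}$.

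For the converse direction, starting from $A {\bf u} = (1/\lambda) G G^{*} {\bf u}$ with $\lambda \neq 0$ and ${\bf u} \neq {\bf 0}$, I would define $p := (1/\lambda)\, G^{*} {\bf u}$. Then $G p = (1/\lambda) G G^{*} {\bf u} = A {\bf u}$, so ${\bf u} = A^{-1} G p$ and applying $G^{*}$ to this relation gives $G^{*} A^{-1} G p = G^{*} {\bf u} = \lambda p$, which recovers the Schur complement eigenvalue equation. One must of course also check $p \neq 0$; if $p = 0$ then $G^{*} {\bf u} = 0$, hence $A {\bf u} = (1/\lambda) G G^{*} {\bf u} = 0$, forcing ${\bf u} = 0$ since $A = \frac{\rho}{\Delta t} I - \mu L$ is invertible (or, in the steady case, invertible on the velocity space with homogeneous Dirichlet data), a contradiction.

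The main thing to be careful about is the role of the hypothesis $\lambda \neq 0$ and the nullspace issue. In the forward direction $\lambda \neq 0$ ensures $G p \neq 0$ (otherwise $\lambda p = G^{*} A^{-1} G p = 0$ forces $p = 0$), which in turn guarantees the velocity-side eigenvector ${\bf u} = A^{-1} G p$ is nonzero. In the converse direction it is used both to define $p$ and to exhibit it as a genuine Schur eigenvector in ${\bf P}$; using $p \in {\bf P}$ (mean-zero pressures) removes the rank-one deficiency of $L^{c} = G^{*} G$ mentioned earlier in the paper. These bookkeeping steps are the only delicate point; there is no real obstacle, since the lemma is essentially the operator identity $G(G^{*} A^{-1} G) = (G G^{*} A^{-1}) G$ read through the correspondence $p \leftrightarrow A^{-1} G p$.
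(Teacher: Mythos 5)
Your proof is correct and takes essentially the same route as the paper: multiply the Schur eigenvalue equation $G^*A^{-1}Gp=\lambda p$ by $G$ and pass to a velocity-space eigenvalue problem. You are in fact somewhat more careful than the paper, which invokes the similarity of $(GG^*)A^{-1}$ and $A^{-1}(GG^*)$ and writes ${\bf u}=Gp$ even though the eigenvector of the generalized problem $A{\bf u}=\tfrac{1}{\lambda}GG^*{\bf u}$ is really $A^{-1}Gp$ (equivalently $A{\bf u}=Gp$), exactly as you identify; your explicit converse and the checks that $p\neq 0$ and ${\bf u}\neq 0$ fill in steps the paper leaves implicit.
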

\begin{proof}
Multiplying both sides of (\ref{eigen_Schur}) by $G$ and denoting ${\bf u}=Gp$, we see that
$$
GG^* A^{-1}{\bf u}  = \lambda {\bf u}.
$$
Moreover, as $A^{-1}(G G^*)$ and $(GG^*) A^{-1}$ are similar to each other, they share the same spectral. Therefore, it is sufficient to investigate the eigenvalue problem
$$
A^{-1} (GG^*){\bf u}  = \lambda {\bf u}.
$$
This eigenvalue problem is equivalent to the generalized eigenvalue problem (\ref{equ_eigen_Schur}) if $\lambda$ is nonzero. We therefore get the conclusion.  \qquad\end{proof}

From Lemma \ref{equivalent_eig}, we see that the nonzero eigenvalues of the Schur complement is equivalent to the generalized eigenvalue of (\ref{equ_eigen_Schur}). Moreover, because both $A^{-1}$ and $GG^* $ are symmetric, by using (\ref{equ_eigen_Schur}), we have
$$
\displaystyle
\frac{1}{\lambda_{max}} = \mbox{min}_{{\bf u} \not\in \mbox{Ker}(G^*)} \frac{(A {\bf u}, {\bf u})}{(GG^*{\bf u}, {\bf u})},  \quad
\mbox{and}
\quad
\frac{1}{\lambda_{min}} = \mbox{max}_{{\bf u} \not\in \mbox{Ker}(G^*)} \frac{(A {\bf u}, {\bf u})}{(GG^*{\bf u}, {\bf u})}.
$$
Here, $\lambda_{max}$ and $\lambda_{min}$ are the maximum and the minimum eigenvalues of the Schur complement $-DA^{-1}G$ respectively. Thus, to estimate the bounds of the spectral of the Schur complement, it is sufficient to study the generalized Rayleigh quotient,
\begin{equation}\label{Rayleigh_quo}
\frac{(A {\bf u}, {\bf u})}{(GG^*{\bf u}, {\bf u})}  \quad \forall {\bf u} \not\in \mbox{Ker}(G^*).
\end{equation}

{\bf Remark 2}. When (\ref{Time_Stokes}) degenerates to the stationary Stokes problem, one can absorb the inverse of the viscosity into the pressure term or one can scale ${\mu}$ in approximating the inverse of the Schur complement. Therefore, without loss of the generality, we will assume that $\mu=1$ and $A=-L$ in this section.

The following theorem states the eigenvalue analysis of the preconditioned system for the steady Stokes problem.

\begin{theorem}
\label{th:eig_steady}
For the steady Stokes problem with purely Dirichlet boundary condition, we have
(i) $\lambda (G^* A^{-1} G) \in \{0\} \cup [\beta^2, 1]$. The multiplicity of 0 eigenvalue is 1.
(ii). There are at most $4(n-1)$ eigenvalues not equal to 1. (If $n_x \neq n_y$, there are at most $2(n_x-1)+2(n_y-1)$ eigenvalues not equal to 1.) For boundary condition with the $x-$ direction periodic and the $y-$ direction Dirichlet, there are at most $2n_x$ eigenvalues not equal to 1.
\end{theorem}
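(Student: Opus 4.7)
For (i), Lemma~\ref{equivalent_eig} shows that every nonzero eigenvalue of $G^*A^{-1}G$ equals the symmetric Rayleigh quotient $\lambda(p)=(A^{-1}Gp,Gp)_0/(p,p)_0$. The Cauchy--Schwarz identity
\[
\sup_{\mathbf{u}\neq 0}\frac{(-D\mathbf{u},p)_0}{\|\mathbf{u}\|_1}=\|A^{-1/2}Gp\|_0,
\]
obtained by taking $\mathbf{u}=A^{-1}(-Gp)$, converts both of the desired bounds on $\lambda(p)$ into bounds on this dual norm. The lower bound $\lambda(p)\ge\beta^2$ is then immediate from the discrete inf-sup estimate of Lemma~\ref{inf_sup_stability}. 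For the upper bound I would prove the discrete Helmholtz-type inequality $\|D\mathbf{u}\|_0\le\|\mathbf{u}\|_1$ for every $\mathbf{u}\in\mathbf{U}$; this follows by discrete summation by parts once the cross term is handled via the identity $(D^xu,D^yv)_{\text{cell}}=(D^yu,D^xv)_{\text{vert}}$, whose boundary contributions vanish thanks to the reflected extension $\tilde{\mathbf{u}}=0$ on $\partial\Omega$. Cauchy--Schwarz together with the same duality step then gives $\|A^{-1/2}Gp\|_0\le\|p\|_0$, hence $\lambda(p)\le 1$. The eigenvalue $\lambda=0$ forces $Gp=0$, which on the full pressure space is attained only by the constant pressure, yielding multiplicity one.

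For (ii), the plan is to work with the commutator $E:=LG-GL^c$. Using $DG=L^c$ and $A=-L$, the rearrangement $L^{-1}G=G(L^c)^{-1}-L^{-1}E(L^c)^{-1}$ gives the algebraic identity
\[
S \;=\; -DA^{-1}G \;=\; DL^{-1}G \;=\; I \,-\, DL^{-1}E\,(L^c)^{-1},
\]
so that $\mbox{rank}(S-I)\le\mbox{rank}(E)$ and, in view of (\ref{eigen_eqn}), the count of non-unitary eigenvalues is bounded by $\mbox{rank}(E)$. A direct five-point stencil comparison then shows $[Ep]=0$ at every $x$-edge $(i-\myhalf,j)$ with $2\le j\le n-1$ and at every $y$-edge $(i,j-\myhalf)$ with $2\le i\le n-1$: in these rows all three Laplacians $L^x$, $L^y$, $L^c$ use their standard interior stencils, so the telescoping in $LGp$ and $GL^cp$ cancels identically. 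The nontrivial rows therefore live only on the $2(n-1)$ boundary-adjacent $x$-edges with $j\in\{1,n\}$ and the $2(n-1)$ boundary-adjacent $y$-edges with $i\in\{1,n\}$. A representative computation at a south-boundary $x$-edge, combining the boundary-modified stencil~(\ref{u_yy_BD_old}) with the Neumann correction inherited by $L^c$, produces the closed form $[Ep]_{i-\myhalf,1}=-(2/h^2)(Gp)_{i-\myhalf,1}$, confirming that each such row is generically nonzero. Summing the four layers gives $\mbox{rank}(E)\le 4(n-1)$, which is the stated bound. The mixed periodic/Dirichlet case is identical: along the periodic direction the circulant structure preserves commutation exactly, so only the two Dirichlet layers contribute, producing at most $2n_x$ nontrivial rows.

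The principal technical obstacle is the bookkeeping at the four corner cells, where the horizontal and vertical boundary modifications act simultaneously and one must verify that they do not introduce additional independent rows of $E$ beyond those already counted along the edges. A short verification at, e.g., the southwest corner edge $(3/2,1)$---where $L^cp_{1,1}$ incurs both the south and west Neumann corrections and the $x$- and $y$-parts of $L^xGp$ each receive the Dirichlet reflection---shows that the same closed form $[Ep]_{3/2,1}=-(2/h^2)(Gp)_{3/2,1}$ persists, so the corner rows are absorbed into the $4(n-1)$ boundary-edge rows without enlarging the rank. Once this is settled, combining the rank bound for $E$ with the Schur-complement identity above and with the decomposition~(\ref{eigen_eqn}) completes the proof of both parts of the theorem.
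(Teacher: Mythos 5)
Your proposal follows essentially the same route as the paper: part (i) rests on the discrete inf-sup condition for the lower bound and on the positive semi-definiteness of $-L-GG^*$ (your inequality $\|D\mathbf{u}\|_0\le\|\mathbf{u}\|_1$ is exactly that statement) for the upper bound, and part (ii) rests on the observation that the commutator $LG-GL^c$ is supported only on the boundary-adjacent edge layers, which is precisely the content of the paper's Kronecker identity $(A-GG^*)G=\frac{2}{h^3}\bigl[\begin{smallmatrix}E_0\otimes B_D^T\\ B_D^T\otimes E_0\end{smallmatrix}\bigr]$; your edge count and your closed form $[Ep]_{i-\myhalf,1}=-(2/h^2)(Gp)_{i-\myhalf,1}$ agree with it. Two points deserve correction or care. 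First, your claim that the boundary contributions in the discrete summation by parts \emph{vanish} is not accurate: with the ghost-cell reflection $\tilde{\mathbf{u}}(\mathbf{x})=-\mathbf{u}(\mathbf{x}')$ the correct decomposition is $-L-GG^*=R^*R+\frac{2}{h^2}\,\mathrm{diag}\left(E_0\otimes I_{n-1},\,I_{n-1}\otimes E_0\right)$ with $R$ the discrete rot, i.e.\ the boundary treatment produces extra diagonal terms (this is where $T_E=T_N+2E_0$ enters the paper's argument); the inequality survives only because these terms are nonnegative, so your proof needs that sign check rather than a cancellation. Second, in the identity $S=I-DL^{-1}E(L^c)^{-1}$ the operator $L^c$ is rank-one deficient, so $(L^c)^{-1}$ must be interpreted on the mean-zero pressure space; with that caveat your rank-transfer identity is a clean algebraic alternative to the paper's route, which instead invokes Lemma~\ref{equivalent_eig}, writes every nontrivial eigenvector as $\mathbf{u}=Gp$, and feeds the rank of $(A-GG^*)G$ into the generalized eigenvalue problem. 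The corner bookkeeping you single out is indeed the only delicate point in the localization, and it is subsumed automatically in the paper's matrix computation.
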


\begin{proof}
(i) The multiplicity of the 0 eigenvalue is easy to understand because the dimension of the null space of the Schur complement $G^*A^{-1}G$ is 1. The lower bound of the eigenvalues of the Schur complement is $\beta^2$, which is independent of the mesh refinement \cite{elman2005finite, kobelkov1986inequality, ol2000best}. This result is derived from the definition of the discrete inf-sup condition \cite{elman2005finite, kobelkov1986inequality, ol2000best}. To see this, we check
\begin{eqnarray}\label{inf_eig_bound}
\displaystyle
\beta^2 \le \inf_{q \notin \mbox{Ker}(G)}\sup_{\bf u \in {\bf U}} \frac{(q, -D{\bf u})^2_0}{||{\bf u}||_1^2||q||_0^2} &=& \inf_{q \notin \mbox{Ker}(G)}\sup_{{\bf u}\in {\bf U}}\frac{(Gq, {\bf u})_{\bf U}^2}{(-L {\bf u}, {\bf u})_{\bf U}||q||_0^2} \cr
&=& \inf_{q \notin \mbox{Ker}(G)} \frac{1}{||q||_0^2} \sup_{{\bf v}=-L^{\myhalf}{\bf u}}\frac{(q, -G^*L^{-\myhalf}{\bf v})_{0}^2}{({\bf v}, {\bf v})_{\bf U}}\cr
&=& \inf_{q \notin \mbox{Ker}(G)} \frac{1}{||q||_0^2} \sup_{{\bf v}\in {\bf U}}\frac{(-L^{-\myhalf}Gq, {\bf v})_{\bf U}^2}{({\bf v}, {\bf v})_{\bf U}}\cr
&=& \inf_{q \notin \mbox{Ker}(G)} \frac{|(-L^{-\myhalf}Gq, -L^{-\myhalf}Gq)_{\bf U}|}{||q||_0^2} \cr
&=& \inf_{q \notin \mbox{Ker}(G)} \frac{(G^* A^{-1} Gq, q)_0}{||q||_0^2}.
\end{eqnarray}
Here $-L^{\myhalf}$ means the positive square root of $-L$ (that is $(-L^{\myhalf})^2=- L$).

To estimate the upper bound of the eigenvalues, we study the properties of the Rayleigh quotient (\ref{Rayleigh_quo}). Firstly, we note that
$$
\frac{(A {\bf u}, {\bf u})}{(GG^*{\bf u}, {\bf u})}= 1+\frac{((A-GG^* ){\bf u}, {\bf u})}{(GG^* {\bf u}, {\bf u})}, \quad \forall {\bf u} \notin \mbox{Ker}(G^*)
$$
To check the eigenvalues of the Schur complement are not greater than 1, it is sufficient to prove
\begin{equation}\label{Rayleigh_ineq}
\frac{((A-GG^* ){\bf u}, {\bf u})}{(GG^* {\bf u}, {\bf u})} \ge 0  \quad \mbox{for any} ~ {\bf u} \notin \mbox{Ker}(G^*).
\end{equation}
By using the matrix representations in the previous section and the properties of the Kronecker product, we have
\begin{equation}\label{curl_rot_Mat}
A-GG^* =\frac{1}{h^2}\left[
\begin{array}{cc}
 T_E \otimes I_{n-1}    & -B_D\otimes B_D^{T}   \\
 -B_D^{T} \otimes B_D       & I_{n-1} \otimes T_E
\end{array}
\right].
\end{equation}

For any $2\times 2$ block symmetric operator, there holds
$$
\left[
\begin{array}{cc}
H    & B^T   \\
B    & C
\end{array}
\right]= \left[
\begin{array}{cc}
I       & 0  \\
BH^{-1} & I
\end{array}
\right]
\left[
\begin{array}{cc}
H    & 0  \\
0    & C-BH^{-1}B^T
\end{array}
\right]
\left[
\begin{array}{cc}
I     & H^{-1}B^T  \\
0     & I
\end{array}
\right].
$$
Thus, to prove $A-GG^* $ is symmetric semi-positive definite, it is sufficient to prove $T_E \otimes I_{n-1}$ (SPD) and $I_{n-1} \otimes T_E - (B_D^{T} \otimes B_D) (T_E \otimes I_{n-1})^{-1} (B_D\otimes B_D^{T})$
are symmetric positive semi-definite. By using the properties of the Kronecker product,
$$
\begin{array}{c}
I_{n-1} \otimes T_E - (B_D^{T} \otimes B_D) (T_E^{-1} \otimes I_{n-1}) (B_D\otimes B_D^{T})= I_{n-1} \otimes T_E -(B_D^{T}T_E^{-1}B_D) \otimes T_N \\
=(I_{n-1}-(B_D^{T}T_E^{-1}B_D)) \otimes T_N + 2I_{n-1} \otimes E_0.
\end{array}
$$
Since $T_E-B_D^{T}B_D=E_0$ is positive semi-definite, we see that $(I_{n-1}-(B_D^{T}T_E^{-1}B_D))$ is positive semi-definite. Therefore (\ref{Rayleigh_ineq}) holds.

(ii). To show that there are at most $4(n-1)$ nonzero eigenvalues, we study the commutation difference operator \cite{guermond2006overview}.
$$
A G  - G L^c= A G- G(G^* G) =(A-GG^* )G.
$$
From (\ref{curl_rot_Mat}), we see that
\begin{eqnarray}\label{diff_curlrot}
(A-GG^* )G&=& \frac{1}{h^3}\left[
\begin{array}{cc}
 T_E \otimes I_{n-1}    & -B_D\otimes B_D^{T}   \\
 -B_D^{T} \otimes B_D       & I_{n-1} \otimes T_E
\end{array}
\right]
\left[
\begin{array}{c}
 I_n \otimes B_D^T      \\
 B_D^{T} \otimes I_{n-1}
\end{array}
\right] = \frac{2}{h^3}\left[
\begin{array}{c}
 E_0 \otimes B_D^T      \\
 B_D^{T} \otimes E_0
\end{array}
\right].
\end{eqnarray}
The right hand side is a matrix with its rank equals to $4(n-1)$. It means that $(A-GG^* )G$ are all zeros in the interior of the domain.  Moreover, for any nontrivial eigenvector ${\bf u} \notin \mbox{Ker}(G^*)$, there exists $p \in {\bf P}$ such that ${\bf u}=Gp$. Therefore, from Lemma \ref{equivalent_eig} and (\ref{diff_curlrot}), we see that there are at most $4(n-1)$ eigenvalues are not equal to 1.

If the boundary conditions are purely periodic, then all the nonzero eigenvalues of $S$ are equal to 1. From the commutation property and Lemma 3.2, the nontrivial eigenvalues of $S$ are equivalent to the generalized eigenvalues of $\lambda AGp = GG^*Gp$. As $AG=-GL^c$, all the nontrivial eigenvalues of $S$ are equal to 1. For the case with the boundary condition which is periodic in $x$- direction and Dirichlet in $y$- direction, the proof is similar to the above discussion. \qquad\end{proof}

{\bf Remark 3.} From Theorem \ref{th:eig_steady} and the inf-sup condition, we conclude that
\begin{equation}\label{Schur_bound}
\displaystyle
1 \le \frac{(-L \ub, \ub)}{(G G^* \ub, \ub)} \le \frac{1}{\beta^2},  \quad \forall \ub \notin \mbox{Ker}(G^*).
\end{equation}
The ideas of the proof of Theorem \ref{th:eig_steady} actually originate from some differential operator identities. Note that
in continuous level, if the function is smooth enough, the following identities of differential operators \cite{arnold2006differential} hold.
\begin{equation}\label{Lap_iden}
-{\bf \Delta} = -\mbox{{\bf grad}}~\mbox{div} + \mbox{{\bf curl}}~\mbox{rot},
\end{equation}
and
\begin{equation}\label{diff_iden2}
-{\bf \Delta}~\mbox{{\bf grad}} = -\mbox{{\bf grad}}~\mbox{div}~\mbox{{\bf grad}},
\end{equation}
as
\begin{equation}\label{divcurl_iden}
\mbox{rot}~\mbox{{\bf grad}}=0, \quad \mbox{div}~ \mbox{{\bf curl}}=0.
\end{equation}
The communication difference operator $A-GG^* $ in (\ref{curl_rot_Mat}) is actually the matrix representation of $\mbox{\bf curl}~\mbox{rot}$ except that there are some modifications near the boundaries (because of the boundary treatment). Moreover, the operators $A-GG^*$ and $(A-GG^* )G$ are the actually discrete analogies of (\ref{Lap_iden})-(\ref{divcurl_iden}).

\begin{table}[h]\label{eig_check}
\begin{center}
\begin{tabular}{|c|c|c|c|c|}
\hline
  BC type    & $n_x$  & $n_y$    & DOF     & No. non-unitary eigs       \\
\hline
pure Diri          &16    &16   &736    &60  \\
\hline
pure Diri          &32    &32   &3008   &124     \\
\hline
x-periodic y-Diri  &16    &32   &1520   &31    \\
\hline
x-periodic y-Diri  &32   &64    &6112   &63    \\
\hline
\end{tabular}
\vspace{2mm} \caption{Number of the nonunitary eigenvalues of the Schur complement under different types of boundary conditions.
}
\end{center}
\end{table}
To verify the conclusions in Theorem \ref{th:eig_steady}, we present some numerical experiments. In Table \ref{eig_check}, we list the number of the non-unitary eigenvalues under different types of the boundary conditions and different numbers of the grid points. The computation is based on uniform mesh partitions (with the meshsize equals to 1) on rectangular domains. We form the Schur complement and call Matlab function to calculate the eigenvalues. For purely Dirichlet boundary condition, the total DOF is equal to $(n_x-1)n_y+n_x(n_y-1)+n_x n_y$. If the boundary condition along $x$- direction is periodic, the total DOF is equal to $n_x n_y+n_x(n_y-1)+n_x n_y$. If the boundary conditions are purely periodic, the total DOF is equal to $3 n_x n_y$. The eigenvalue calculations are based on (\ref{preconditioned_sys}) and (\ref{eigen_eqn}). From Table 3.1, we see that the results verify the theoretical predictions. We comment here that there are similar results in \cite{kobelkov2000effective} and the reference therein \cite{kobelkov1994numerical}. The discussions therein are based on the comparisons of the dimensions of functionals spaces under different boundary condition types.

Note that when the Stokes problem is stationary, $P_4$ is similar to the BFBt preconditioner in \cite{elman1999preconditioning}. The analysis of $P_4^{-1} M$ in the steady state Stokes case can be found in \cite{elman1999preconditioning}, we therefore omit it.


\subsection{Analysis of the preconditioners for the unsteady models}

For the time dependent Stokes problem, we can again get started from (\ref{preconditioned_sys}) and (\ref{eigen_eqn}). As $P_2^{-1} M$ and $P_3^{-1} M$ share the same spectral with that of $P_1^{-1} M$. The following spectral analysis is valid for all these three preconditioners. 

To simplify our presentation, we analyze an equivalent system like that in \cite{mardal2004uniform}. Multipling (\ref{Saddle_operator}) by $\frac{\Delta t}{\rho}$, denoting $\frac{\mu \Delta t}{\rho}$ as $\epsilon^2$ and absorbing the scaling $\frac{\Delta t}{\rho}$ into the pressure term, we see that the resulting system is again a saddle point problem of the form (\ref{Saddle_operator}) with $A= I - \epsilon^2 L$ and
$\mathbb{S}^{-1}= (- L^c)^{-1}+\epsilon^2 I$. The corresponding projection method based preconditioner is
\begin{equation}\label{proj_precon_newscale}
P_1^{-1}=\left[\begin{array}{cc}
I & - G\\
0      & I-\epsilon^2 L^c
\end{array}\right]
\left[\begin{array}{cc}
I     &  0 \\
0     &  (-L^c)^{-1}
\end{array}\right]
\left[\begin{array}{cc}
I     &  0 \\
-D&  -I
\end{array}\right]
\left[\begin{array}{cc}
A^{-1}       & 0 \\
0            & I
\end{array}\right],
\end{equation}
and the preconditioned system is (\ref{preconditioned_sys}) with
\begin{eqnarray}\label{app_est}
\mathbb{S}^{-1} S & = &(I - \epsilon^2 L^c)(-L^c)^{-1}(-DA^{-1}G) \cr
&\approx& (I - \epsilon^2 L^c)(-L^c)^{-1}(-D) (G(I - \epsilon^2 L^c)^{-1}) \approx I.
\end{eqnarray}
As mentioned before, when boundary condition is purely periodic, the "=" holds in (\ref{app_est}).

By using the connections between the eigenvalues of $\mathbb{S}^{-1}S$ and a generalized eigenvalue problem, we will reduce the spectral analysis to the analysis of the coefficients of a quadratic polynomial. We have the following theorem for the unsteady Stokes problem.

\begin{theorem}
\label{th:time_analysis}
For the unsteady Stokes problem, the eigenvalues of the preconditioned system have uniform lower and upper bounds, which are independent of the mesh refinement and the parameter $\epsilon$. More precisely, $\beta^2\le \lambda(\mathbb{S}^{-1} S) \le 1$.
\end{theorem}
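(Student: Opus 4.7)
The plan is to mirror the strategy of Lemma 3.2 and Theorem 3.3, while carrying along the extra identity term in $A=I-\epsilon^2 L$. First, since both $S$ and $\mathbb{S}$ are symmetric positive definite on the zero-mean pressure space ${\bf P}$, the operator $\mathbb{S}^{-1}S$ is similar to $\mathbb{S}^{-1/2}S\mathbb{S}^{-1/2}$, so its spectrum is real and the extreme eigenvalues coincide with the extrema of the generalized Rayleigh quotient
\[
\lambda \;=\; \frac{(Sp,p)_0}{(\mathbb{S} p,p)_0},\qquad p\notin\ker G.
\]
A short algebraic calculation using the Sherman--Morrison-type identity $G^*(I+\epsilon^2 GG^*)^{-1}=(I+\epsilon^2 T)^{-1}G^*$, with $T:=G^*G=-L^c$, shows that $\mathbb{S}=G^*B^{-1}G$ where $B:=I+\epsilon^2 GG^*$. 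Hence both $(Sp,p)_0$ and $(\mathbb{S} p,p)_0$ take the common form $(X^{-1}Gp,Gp)_{{\bf U}}$ with $X=A$ or $X=B$.

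For the upper bound $\lambda\le 1$ I would reuse the Hodge-type splitting $-L=GG^*+C$ with $C\succeq 0$ already exploited in Theorem 3.3 (the discrete curl-curl operator). This gives $A=B+\epsilon^2 C\succeq B$, hence $A^{-1}\preceq B^{-1}$ in the operator order, and the Rayleigh ratio is bounded above by one.

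For the lower bound $\lambda\ge\beta^2$ I would transfer the problem to the velocity side in the spirit of Lemma 3.2. Applying $G(G^*G)^{-1}$ from the left to $G^*A^{-1}Gp=\lambda G^*B^{-1}Gp$ and using that $B$ preserves $R:=\mathrm{Range}(G)$, every nontrivial eigenvalue satisfies the generalized eigenvalue problem $P_R A^{-1}{\bf u}=\lambda B^{-1}{\bf u}$ with ${\bf u}=Gp\in R$, so that $\lambda=(A^{-1}{\bf u},{\bf u})/(B^{-1}{\bf u},{\bf u})$. Using the dual variational characterization $(A^{-1}{\bf u},{\bf u})=\sup_{{\bf w}}({\bf u},{\bf w})^2/(A{\bf w},{\bf w})$ together with a test field ${\bf w}_p$ produced by the inf-sup condition of Lemma 3.1, and the elementary bound $\|G^*{\bf w}\|_0\le\|{\bf w}\|_1$, the comparison $(Sp,p)\ge\beta^2(\mathbb{S} p,p)$ reduces after clearing denominators to the non-negativity of a quadratic polynomial in $\lambda$ whose coefficients depend only on $\beta$, $\epsilon^2$ and the scalars $\|{\bf u}\|_0^2$, $(GG^*{\bf u},{\bf u})$, $(-L{\bf u},{\bf u})$.

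The main obstacle is this last reduction. Lemma 3.1 is stated in the unweighted pair of norms $(\|\cdot\|_1,\|\cdot\|_0)$, whereas $\mathbb{S}$ interpolates between the discrete $H^{-1}$ pressure norm (at $\epsilon\to 0$, where the bound collapses to Theorem 3.3) and the $\epsilon^{-2}L^2$ norm (at $\epsilon\to\infty$, a mass-type estimate); the test field must therefore be chosen so that both $\|{\bf w}_p\|_0$ and $\epsilon\|{\bf w}_p\|_1$ are simultaneously controlled by the $\mathbb{S}$-norm of $p$. Once this uniform scaling is in place, bounding the coefficients of the quadratic is routine and yields $\beta^2\le\lambda\le 1$ independently of the mesh size $h$ and the parameter $\epsilon$.
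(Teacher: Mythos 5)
Your setup and your upper bound are complete and correct, and in fact a little tidier than the paper's own argument: the push-through identity $G^*(I+\epsilon^2 GG^*)^{-1}=(I+\epsilon^2 G^*G)^{-1}G^*$ does give $\mathbb{S}=G^*B^{-1}G$ with $B:=I+\epsilon^2 GG^*$ (interpreted on the zero-mean pressure space), and $A-B=\epsilon^2(-L-GG^*)\succeq 0$ by part (i) of Theorem \ref{th:eig_steady}, so $A^{-1}\preceq B^{-1}$ and $(Sp,p)_0=(A^{-1}Gp,Gp)_{\bf U}\le (B^{-1}Gp,Gp)_{\bf U}=(\mathbb{S}p,p)_0$, i.e.\ $\lambda\le 1$. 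The paper reaches the same upper bound by showing $A-G\mathbb{S}^{-1}G^*\succeq 0$ on $\mbox{Range}(G)$, which is the same fact in a different dress.

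The lower bound, however, is not a proof as written: you explicitly defer ``the main obstacle,'' namely the construction of a test field whose $\|\cdot\|_0$ and $\epsilon\|\cdot\|_1$ norms are simultaneously controlled by the $\mathbb{S}$-norm of $p$, and everything after that is conditional on it. That is precisely where the content of the theorem lives, so there is a genuine gap. Two remarks. First, the paper needs no test-field construction at all: it works with the velocity-side quotient $(G\mathbb{S}^{-1}G^*{\bf u},{\bf u})/(A{\bf u},{\bf u})$ for ${\bf u}=Gp$, whose numerator equals $({\bf u},{\bf u})+\epsilon^2(GG^*{\bf u},{\bf u})$ by the projection identity and whose denominator is $({\bf u},{\bf u})+\epsilon^2(-L{\bf u},{\bf u})$; the two-sided bound $1\le (-L{\bf u},{\bf u})/(GG^*{\bf u},{\bf u})\le\beta^{-2}$ on $\mbox{Range}(G)$ from Remark 3 (inf-sup plus the same semidefiniteness used above) then yields the ratio $\ge\beta^2$ via the elementary inequality $(1+s)/(1+s/\beta^2)\ge\beta^2$ for $s\ge 0$. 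Second, your own route can be closed with ingredients you already have: in the dual characterization $(A^{-1}{\bf u},{\bf u})=\sup_{\bf w}({\bf u},{\bf w})^2/(A{\bf w},{\bf w})$ choose ${\bf w}=B^{-1}{\bf u}$, which stays in $\mbox{Range}(G)$ because $B$ preserves that subspace; on $\mbox{Range}(G)$ one has $(A{\bf w},{\bf w})=({\bf w},{\bf w})+\epsilon^2(-L{\bf w},{\bf w})\le\beta^{-2}\bigl(({\bf w},{\bf w})+\epsilon^2(GG^*{\bf w},{\bf w})\bigr)=\beta^{-2}(B{\bf w},{\bf w})$, whence $(A^{-1}{\bf u},{\bf u})\ge\beta^2\,(B^{-1}{\bf u},{\bf u})^2/(B^{-1}{\bf u},{\bf u})=\beta^2(\mathbb{S}p,p)_0$. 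The ``simultaneous control'' you were seeking is nothing more than the steady-state inf-sup bound restricted to $\mbox{Range}(G)$; until one of these arguments is spelled out, the bound $\lambda\ge\beta^2$ remains unestablished.
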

\begin{proof}
To investigate the eigenvalues of $\mathbb{S}^{-1} S$, we consider the following generalized eigenvalue problem \cite{elman2005finite}.
\begin{equation}\label{time_eig_sys}
\left[\begin{array}{cc}
A & G\\
-D& 0
\end{array}\right]
\left[\begin{array}{c}
{\bf u} \\
p
\end{array}\right]=\sigma
\left[\begin{array}{c}
 A {\bf u} \\
-\mathbb{S} p
\end{array}\right].
\end{equation}
If $\sigma \ne 0$, then $p= - \frac{1}{\sigma} \mathbb{S}^{-1} G^* \ub$. Substituting it into the first equation of (\ref{time_eig_sys}), we have
\begin{equation}\label{quad_eqn}
A \ub - \frac{1}{\sigma} G \mathbb{S}^{-1} G^* \ub = \sigma A \ub.
\end{equation}
For (\ref{quad_eqn}), taking the inner product with $\ub$ and dividing by $(A \ub, \ub)$, we obtain the following quadratic polynomial.
$$
\sigma^2 - \sigma + \frac{(G \mathbb{S}^{-1} G^* \ub, \ub)}{(A \ub, \ub)} =0.
$$
We note that $\sigma$ satisfies a quadratic polynomial equation and therefore its values can be found by using Vieta's formulas. To prove $\sigma$ has uniform lower and upper bounds, it is sufficient to prove the generalized Rayleigh quotient
$$
\frac{(G \mathbb{S}^{-1} G^* \ub, \ub)}{(A \ub, \ub)}, \quad  \forall {\bf u} \notin \mbox{Ker}(G^*)
$$
has uniform lower and upper bounds.

Firstly, there holds
\begin{equation}\label{upper_bound}
\frac{(G \mathbb{S}^{-1} G^* \ub, \ub)}{(A \ub, \ub)} \le 1, \quad \forall {\bf u} \notin \mbox{Ker}(G^*).
\end{equation}
To prove (\ref{upper_bound}), we note that
$$
A - G \mathbb{S}^{-1} G^* = I - \epsilon^2 L  - G (-L^c)^{-1} G^* - \epsilon^2 G G^*
= (I - G (-L^c)^{-1} G^*)+ \epsilon^2 (-L-G G^*).
$$
Using Theorem \ref{th:eig_steady} and (\ref{Schur_bound}), $-L-G G^*$ is symmetric positive semi-definite. Moveover, because ${\bf u} \notin \mbox{Ker}(G^*)$, there exists a $p \in {\bf P}$ such that ${\bf u}=Gp$. Therefore,
$$
(I - \epsilon^2 L  - G (-L^c)^{-1} G^* \ub, \ub)=(G p, G p) - ((G (G^* G)^{-1} G^*) G p, G p) =0.  \quad  \forall {\bf u} \notin \mbox{Ker}(G^*).
$$
It follows that
\begin{equation}\label{ker_iden}
\ds
\frac{((I - G (G^* G)^{-1} G^*) \ub, \ub)}{(\ub, \ub)}   = 0,   \quad \forall {\bf u} \notin \mbox{Ker}(G^*).
\end{equation}
Therefore, (\ref{upper_bound}) holds.

To estimate the lower bound, we check
\begin{eqnarray}\label{inf_bound}
\displaystyle
\inf_{{\bf u} \notin \mbox{Ker}(G^*)} \frac{(G \mathbb{S}^{-1} G^* \ub, \ub)}{(A \ub, \ub)} &= & \inf_{{\bf u} \notin \mbox{Ker}(G^*)} \frac{(G (G^* G)^{-1} G^* \ub, \ub)+\epsilon^2 (G G^* \ub, \ub)}{(\ub, \ub)+\epsilon^2(-L \ub, \ub)} \cr
& = & \inf_{\ub \notin \mbox{Ker}(G^*)} \frac{\frac{(G (G^* G)^{-1} G^* \ub, \ub)}{(G G^* \ub, \ub)}+\epsilon^2}{\frac{(\ub, \ub)}{(G G^* \ub, \ub)}+\epsilon^2 \frac{(-L \ub, \ub)}{(G G^* \ub, \ub)}} \cr
& \ge & \inf_{\ub \notin \mbox{Ker}(G^*)} \frac{\frac{(G (G^* G)^{-1} G^* \ub, \ub)}{(G G^* \ub, \ub)}+\epsilon^2}{\frac{(\ub, \ub)}{(G G^* \ub, \ub)}+\frac{1}{\beta^2} \epsilon^2} \cr
& = & \inf_{\ub \notin \mbox{Ker}(G^*)} \frac{\beta^2 \frac{((G^* G)^{-1} G^* \ub, G^* \ub)}{(\ub, \ub)}+\beta^2 \epsilon^2 \frac{(G^* \ub, G^* \ub)}{(\ub, \ub)}} {\beta^2+ \epsilon^2 \frac{(G^* \ub, G^* \ub)}{(\ub, \ub)}} \cr
& = & \inf_{\ub \notin \mbox{Ker}(G^*)} \frac{\beta^2 +\beta^2 \epsilon^2 \frac{(G^* \ub, G^* \ub)}{(\ub, \ub)}} {\beta^2+ \epsilon^2 \frac{(G^* \ub, G^* \ub)}{(\ub, \ub)}}.
\end{eqnarray}
In the above proof, we have used (\ref{proj_precon_newscale}) and (\ref{ker_iden}). Note that $\forall {\bf u} \notin \mbox{Ker}(G^*)$, $\frac{(G^* \ub, G^* \ub)}{(\ub, \ub)} > 0$, the right hand side of (\ref{inf_bound}) is a decreasing function of $\epsilon^2$. Let $\epsilon^2 \rightarrow +\infty$, we conclude that
$$
\displaystyle
\inf_{{\bf u} \notin \mbox{Ker}(G^*)} \frac{(G \mathbb{S}^{-1} G^* \ub, \ub)}{(A \ub, \ub)}  \ge \beta^2.
$$
It follows that $\sigma$ has uniform lower and upper bounds. Note that there holds $\lambda(\mathbb{S}^{-1} S) = \sigma (1- \sigma)$ \cite{elman2005finite}, the eigenvalues of $\mathbb{S}^{-1} S$ have uniform lower and upper bounds. To be more precise, $\beta^2\le \lambda(\mathbb{S}^{-1} S) \le 1$.  \qquad\end{proof}

We comment here that the analysis in this work is much more simple and straightforward than those in \cite{mardal2004uniform, mardal2011preconditioning, olshanskii2006uniform, olshanskii2012multigrid}, which are based on abstract theory and for FEM discretization. To further illustrate the differences of the preconditioners using the MAC discretization and the FEM discretization, we consider the above preconditioners in the case when the Stokes model degenerates to the inviscid model, i.e., $\mu=0$, or equivalently, $\Delta t \rightarrow 0$. By using the redefined scale,
$$
M = \left[\begin{array}{cc}
I   & G\\
-D  & 0
\end{array}\right] \quad \mbox{and} \quad
P_1^{-1}=\left[\begin{array}{cc}
I & - G\\
0      & I-L^c
\end{array}\right]
\left[\begin{array}{cc}
I     &  0 \\
0     &  (-L^c)^{-1}
\end{array}\right]
\left[\begin{array}{cc}
I     &  0 \\
-D&  -I
\end{array}\right]
\left[\begin{array}{cc}
I^{-1}       & 0 \\
0            & I
\end{array}\right].
$$
We note that $P_4$ degenerates to $P_1$. Furthermore,
\begin{eqnarray}\label{preconditioned_ell}
P^{-1}_1 M &=& \left[\begin{array}{cc}
I        & (I-G(L^c)^{-1}D)I^{-1} G\\
0        & (L^c)^{-1} D I^{-1} G
\end{array}\right]
=
\left[\begin{array}{cc}
I        & (I-G(L^c)^{-1}D)I^{-1} G\\
0        & I
\end{array}\right] =
\left[\begin{array}{cc}
I        & 0 \\
0        & I
\end{array}\right].
\end{eqnarray}
Similarly,
\begin{eqnarray}\label{preconditioned_ell_P2}
P^{-1}_2 M &=&
\left[\begin{array}{cc}
I     &  0 \\
0     &  (-L^c)^{-1}
\end{array}\right]
\left[\begin{array}{cc}
I     &  0 \\
-D&  -I
\end{array}\right]
\left[\begin{array}{cc}
I^{-1}  & 0 \\
0       & I
\end{array}\right]
\left[\begin{array}{cc}
I   & G\\
-D  & 0
\end{array}\right]
=
\left[\begin{array}{cc}
I        & G\\
0        & I
\end{array}\right].
\end{eqnarray}
It is easy to verify that $(P^{-1}_2 M - I)^2=0$. We note that the calculations of (\ref{preconditioned_ell}) and (\ref{preconditioned_ell_P2}) do not depend on the types of the boundary conditions. Furthermore, as  GMRES method possesses the Galerkin property \cite{eiermann2001geometric}, when the system degenerates to the mixed form of an elliptic operator, it  converges in 1 iteration using $P_1$ and converges in 2 iterations using $P_2$. Interesting thing is that the conclusion holds true no matter what type of boundary conditions are imposed. One of the reasons is that the MAC discretization of identity operator is exactly the identity matrix. In comparison, the FEM discretization will lead to velocity mass matrix or pressure mass matrix, which is not always an identity matrix.

\section{Numerical Experiments}

In this section, we conduct numerical experiments to further compare the performance of the different preconditioners. The computational domain is $[0, L_x] \times [0, L_y]$ with $L_x=L_y=L=2^{6}$. We fix $\mu =1$, $h_x=h_y=h=1$ and $\Delta t= 0.5 \frac{h^2}{\mu }= 0.5$. The stopping criterion of the GMRES method is set to be
$$
\frac{||{\bf r}_n||}{||{\bf r}_0||} < 1.0 \times 10^{-10}.
$$
Here ${\bf r}_n$ is the residual at the $n$-th step and ${\bf r}_0$ is the initial residual. Cholesky factorization is applied to find $A^{-1}$ and $(- L^c)^{-1}$. The following is the expression of the Taylor vortex flow.
\begin{equation}\label{TaylorVortex}
 \left \{
  \begin{array} {lll}
u({\bf x}, t)&=1 - 2e^{-8\pi^2\mu t/L^2}\cos{\left(\frac{2\pi(x-t)}{L}\right)}\sin{\left(\frac{2\pi(y-t)}{L}\right)},  \\
v({\bf x}, t)&=1 + 2e^{-8\pi^2\mu t/L^2}\sin{\left(\frac{2\pi(x-t)}{L}\right)}\cos{\left(\frac{2\pi(y-t)}{L}\right)}, \\
p({\bf x}, t)&= -e^{-16\pi^2\mu t/L^2}\left[\cos{\left(\frac{4\pi(x-t)}{L}\right)} + \cos{\left(\frac{4\pi(y-t)}{L}\right)}\right].
  \end{array}
 \right.
\end{equation}
As (\ref{TaylorVortex}) is the solution of the unforced Navier-Stokes equation, we move $({\bf u}^k \cdot \nabla){\bf u}^k$ to the right hand side of (\ref{Time_Stokes}). Then, we obtain a forced Stokes problem. Different types of boundary conditions are imposed to check their effects on the performance of the preconditioners. The boundary values are provided according to the expression (\ref{TaylorVortex}). However, we let $\rho$ vary. (We comment here that (\ref{TaylorVortex}) is the exact solution only when $\rho=1$).

\begin{table}[h]
\begin{center}
\begin{tabular}{|c||ccc|ccc|ccc|}
\hline
  BC type         &       & pure Diri     &      &     & x-periodic y-Diri     &     &    & pure periodic  & \\
\hline
$\rho$ &$N_1$ &$N_3$ &$N_4$  &$N_1$ &$N_3$ &$N_4$ &$N_1$ & $N_3$ &$N_4$  \\
\hline
\hline
$100$      &4  &5     &2    &4  &4    &3    &1   &2   &1 \\
\hline
$10$       &6   &6    &3    &5  &6    &3    &1   &2   &1\\
\hline
$1$        &9   &9    &6    &7  &8    &5    &1   &2   &1\\
\hline
$0.1$      &11 &11   &10    &9  &9   &10    &1   &2   &1\\
\hline
$0.01$     &12 &13   &17    &9  &10  &13    &1   &2  &1\\
\hline
$0$        &17 &19   &24    &11 &13  &19    &1   &2  &1\\
\hline
\end{tabular}
\vspace{2mm} \caption{The number of iterations of the GMRES method using different preconditioners for the unsteady Stokes problems with constant density and constant viscosity.}
\end{center}
\end{table}

In Table 4.1, we summarize the numerical results. Note that one can compare the performance using the left preconditioned system $P_2^{-1} M$ with that using the right preconditioned system $MP_3^{-1}$, we will report one of them. We use $N_1$, $N_3$ and $N_4$ to denote the number of iterations using $P_1$, $P_3$ and $P_4$ respectively. From the experiments, we observe that $P_1$ provides slightly better results than that of $P_3$, in particular when the boundary conditions are non-periodic. The reason is that the two preconditioned systems have the same spectral but the $(1, 2)$ block of $P_1^{-1} M$ contains a projection operator. $P_4$ is slightly better when the viscous term is weak (i.e., when the system is more close to the mixed form of an elliptic operator). However, in other regime, the advantage of $P_4$ is not obvious and it needs more operation cost for implementing $P_4^{-1}$. We omit the results for the case $\rho \rightarrow \infty$, because the theoretical analysis has predicted that GMRES method, using $P_1$ converges in 1 iteration, using $P_2$ or $P_3$ converges in 2 iterations. When the solvers for $A^{-1}$ and $(-L^c)^{-1}$ become inexact and the coefficients of the Stokes problems are variable, we refer the readers to our recent work \cite{cai2013efficient}.

\section{Concluding Remarks}

In this work, some projection method based precondtioners for models of incompressible flow are presented. The uniform bounds of the eigenvalues of the preconditioned systems are derived for both the steady and unsteady cases. For the preconditioners using the the approximate Schur complement, we build up the connections between the projection method based preconditioners with the Cahouet-Chabard type preconditioners. The analysis in this work demonstrates the effects of the boundary treatment.
Compared with the corresponding block triangular preconditioners, we see that the projection method based preconditioners provides slightly better approximation properties because the off-diagonal block of the preconditioned system contains a projection operator. For the preconditioners using the exact Schur complement, the study is similar to those Schur complement based preconditioners \cite{ipsen2001note, murphy2000note}. When inexpensive approximations of $A^{-1}$ and of the Schur complement $\mathbb{S}^{-1}$ or $S^{-1}$ exist, these projection method based preconditioners are of practical use. We comment here that the projection method based preconditioner can be realized using other stable discretizations \cite{arnold2006differential, hyman1997adjoint, turek1999efficient}, for instance, finite element discretization or spectral method. Furthermore, the projection method based precoditioners can be extended to other saddle point problems including the mixed form of the elastic problems, optimal control problems and the mixed problems in electromagnetics \cite{benzi2005numerical, brezzi1991mixed}. For a more general saddle point problem of the form
\begin{equation}\label{general_saddle}
M=
\left[\begin{array}{cc}
A    & G\\
-D   & C
\end{array}\right],
\end{equation}
one can also devise the projection method based preconditioners. We note that the Schur complement for (\ref{general_saddle}) is
\begin{equation}\label{new_Schur}
S=C - (-D) A^{-1} G.
\end{equation}
The projection method based preconditioner can be devised by combing the derivation in this paper and the discussion in \cite{ipsen2001note}. If the exact Schur complement is used, then the corresponding projection method based preconditioner again takes the form (\ref{P1_ext}) with $S$ given in (\ref{new_Schur}). If the approximate Schur complement is used, then one obtains $P_1^{-1}$ of the form (\ref{better_P1}). We comment here that for differential operator based saddle point problems, $\mathbb{S}^{-1}$ can be derived by using Fourier analysis approach. 

{\bf Acknowledgments}. The author would like to express his sincere thanks to Aleksandar Donev, Boyce E. Griffith and Olof Widlund for their stimulating discussions.

%

\penalty-8000


\end{document}